\documentclass[12pt,a4paper]{article}
\pdfoutput=1 
\usepackage[T1]{fontenc}
\usepackage[utf8]{inputenc}
\usepackage[english]{babel}
\usepackage{authblk}
\usepackage{amsmath,amssymb,amsthm}
\usepackage{graphicx}
\usepackage{color}
\usepackage{hyperref}
\usepackage{todonotes}
\usepackage{amsfonts}
\usepackage{mathrsfs}
\usepackage{url}
\textheight=24cm
\textwidth=16cm
\oddsidemargin=0pt
\topmargin=-1.5cm
\parindent=24pt
\parskip=0pt
\tolerance=2000
\flushbottom

\newtheorem{theorem}{Theorem}
\newtheorem{lemma}{Lemma}
\newtheorem{remark}{Remark}

\allowdisplaybreaks

\begin{document}

\begin{center}
	\textbf{\Large {Stability and instability of steady states for a branching
random walk}}
\end{center}

\begin{center}
	\textit{{\large Yaqin Feng, Stanislav Molchanov, Elena Yarovaya}}
\end{center}

\begin{abstract}
We consider the time evolution of a lattice branching random walk
with local perturbations. Under certain conditions, we prove the Carleman
type estimation for the moments of a particle subpopulation number  and show the existence of a steady
state.

{\it Keywords:} Branching random walk; Local perturbation; Steady
state; Limit theorems.
\medskip

\textbf{MSC 2010:} 60J80, 60J35, 60G32
\end{abstract}

\section{Introduction}

There are several models in population dynamics, in which steady
states (statistical equilibrium) exist.
Typical models in this area are based on the concept of branching random walk.
Numerous
variations and versions of  branching random walk models have been
studied and used to describe the population dynamics, see \cite{1998,Bogachev_1998a,Bogachev_1998b,Feng_2012,Koralov_2013,SM_JW_2017,Molchanov_Yarovaya_2012,Molchanov_Yarovaya_2012_a,Molchanov_Yarovaya_2012_b,Molchanov_Yarovaya_2012_c,Yarovaya_2007} and bibliography therein.

The simplest model of such a type, which we call\emph{ the contact critical model},
in the lattice case has the following structure.
We consider a random field $n(t,\cdot)$ of  particles on $\mathbb{Z}^{d}$, $d\ge 1$, where $n(t,y)$ is the number of particles at the point $y \in \mathbb{Z}^{d}$ at the time moment $t\ge 0$.
At the moment $t=0$, there is a single particle at each point $x\in \mathbb{Z}^{d}$,
that is $n(0,x)\equiv1$. Each of such initial particles, independently of others,
generates its own subpopulation of the particle offsprings $n(t,y,x)$ at the point $y\in \mathbb{Z}^{d}$ at the moment
$t$. Then the total population of the particle offsprings over $\mathbb{Z}^{d}$ is defined by $n(t,y):=\sum_{x\in \mathbb{Z}^{d}}n(t,y,x)$.

The evolution of the subpopulation $n(t,y,\cdot)$ includes the random
walk of the particles until the first transformation. The underlying random
walk is described by the generator
\begin{eqnarray*}
\mathit{\mathcal{L}_{a}}f(x)=\sum_{z\in \mathbb{Z}^{d}}(f(x+z)-f(z))a(z)
\end{eqnarray*}
with $a(z)=a(-z)$, $\sum_{z\neq0}a(z)=1$ and $\mathop{\mathrm{span}}\{z_{j}:\,a(z_{j})>0\}=\mathbb{Z}^{d}$.
The transformation includes the splitting (i.e. the splitting of each
particles into two particles at the same lattice point as parental one) with
the rate $\beta>0$ and the annihilation with the mortality rate $\mu$. The
central assumption of criticality for such branching process at every lattice point  is $\beta=\mu$.
\begin{remark}\label{R1}
A related result for the continuum $\mathbb{R}^{d}$ was obtained under some additional conditions in \cite{Kondratiev_2008,Kondratiev_2016}. It is based on the forward Kolmogorov equations for the correlation functions and is not applicable for the branching random walks on $\mathbb{Z}^{d}$.
\end{remark}
The following theorem gives the final condition for the existence
of the steady state, see, e.g. \cite{SM_JW_2017}.
\begin{theorem}\label{T1}
If, for a BRW under consideration, the underlying random walk with the generator $\mathcal{L}_{a}$
is transient, that is
\[
\int_{[-\pi,\pi]^{d}}\frac{1}{1-\hat{a}(k)}dk<\infty,
\]
 where $\hat{a}(k)=\sum_{z\neq0}\cos (k,z)a(z)$, then
\[
n(t,y)\xrightarrow{\text{Law}}n(\infty,y),
\]
where $n(\infty,y)$ is a steady state. If the random walk is recurrent,
then
\[
n(t,y)\xrightarrow{\text{Law}}0.
\]
\end{theorem}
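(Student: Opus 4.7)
The plan is to prove convergence in law by the method of moments: compute all factorial moments of $n(t,y)$, identify their limits as $t\to\infty$, and invoke Carleman's uniqueness criterion for the moment problem to pass from moment convergence to convergence of laws. Two ingredients drive every step: the independence of the subpopulations $n(t,\cdot,x)$ generated by different initial particles, and the many-to-$k$ formulas from the theory of critical Markov branching processes.

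First I would verify the normalization $\Expect n(t,y)=1$, which follows from criticality ($\beta=\mu$) together with the doubly-stochastic identity $\sum_{x}p(t,x,y)=1$ for the walk kernel. For the covariance I would apply the many-to-two formula: a pair of distinct descendants of a common ancestor at $x$ comes from a unique most recent common splitting event at some earlier time $s$ and site $z$, with splitting rate $\beta$. Summing over initial sites $x$ and exploiting translation invariance yields
\begin{equation*}
\operatorname{Var} n(t,y) \;=\; 1 - p(2t,y,y) \;+\; \beta\int_{0}^{2t} p(v,y,y)\,dv.
\end{equation*}
Since $p(2t,y,y)\to 0$ in every dimension while $\int_{0}^{\infty}p(v,y,y)\,dv$ equals, up to a constant, the Fourier transience integral $\int_{[-\pi,\pi]^{d}}(1-\hat a(k))^{-1}\,dk$ of the theorem, the dichotomy is already visible at the level of the variance: bounded in the transient regime and infinite in the recurrent one.

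For the transient case I would extend the computation to every $k$-th factorial moment inductively. The many-to-$k$ formula represents $\Expect\!\left[n(t,y)^{(k)}\right]$ as a sum over binary genealogical trees with $k$ leaves, each tree contributing an integral of a product of transition kernels $p$ along its edges. After summing over starting sites, translation invariance collapses the contribution of every tree into a product of at most $k-1$ Green-type integrals, each bounded by the finite Green's function $G(y,y)=\int_{0}^{\infty}p(v,y,y)\,dv$. Because the number of labelled binary trees on $k$ leaves grows like $C^{k}k!$, the limiting moments satisfy $m_{k}(\infty,y)\le C'^{\,k}k!$, which is enough for Carleman's condition $\sum_{k}m_{2k}(\infty,y)^{-1/(2k)}=\infty$. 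Hence $\{m_{k}(\infty,y)\}$ determines a unique probability law, and convergence of moments yields convergence in distribution to a random variable $n(\infty,y)$.

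For the recurrent case I would exploit the independence of subpopulations, writing $P(n(t,y)=0)=\prod_{x}(1-u(t,x))$, where $u(t,x)=P(n(t,y,x)\ge 1)$ satisfies the nonlinear backward equation $\partial_{t}u=\mathcal{L}_{a}u-\beta u^{2}$ with $u(0,\cdot)=\delta_{\cdot,y}$, coming from the probability-generating-function equation of the single-ancestor BRW. Summing this PDE over $x$ gives $(d/dt)\sum_{x}u(t,x)=-\beta\sum_{x}u(t,x)^{2}$, and a Feynman--Kac / heat-kernel comparison shows that $\sum_{x}u(t,x)\to 0$ exactly when the walk is recurrent (the dichotomy again matching divergence of the Green's function), hence $P(n(t,y)=0)\to 1$, i.e.\ $n(t,y)\to 0$ in law. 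The principal obstacle I anticipate is the inductive bookkeeping in the $k$-th moment computation: controlling the binary-tree combinatorics while preserving the $C^{k}k!$ growth required by Carleman, and in particular making sure that each translation-invariant summation over initial particles delivers exactly one Green's-function factor per internal node of the tree rather than an uncontrolled power.
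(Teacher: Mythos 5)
Your proposal is correct and follows essentially the same route as the paper: the method of moments with the Carleman bound $M_{l}\leq l!\,C^{l}$, exploiting the independence of the subpopulations $n(t,x,y)$ over initial sites $x$; your many-to-few genealogical-tree representation of the factorial moments is just the combinatorial form of the paper's iterated Duhamel solution of the backward Kolmogorov moment equations (your binary-tree count is exactly the Catalan recursion $D_{l}=\sum_{i}\binom{l-1}{i}D_{i}D_{l-i}$ that the paper uses in Section 2), and your variance identity agrees with what the $m_{2}$ equation yields after summing over $x$. The paper itself only sketches this strategy and defers to \cite{SM_JW_2017}; your handling of the recurrent case via the extinction probabilities $u(t,x)$ is a sensible supplement, with the caveat that the claim $\sum_{x}u(t,x)\to 0$ precisely in the recurrent regime is the one step still requiring a detailed argument.
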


\begin{remark}\label{R2}
Note that for a recurrent random walk, the assertion of Theorem~\ref{T1} corresponds to the phenomenon of
clusterization: for large $t$, the population consists of ``large
islands'' of particles  the distance between which increases with increasing $t$.
\end{remark}

Proof of Theorem~\ref{T1}
is based on the analysis of the moments and correction functions.
First, it is necessary to prove that, for any positive integer $l$,
the moment $En^{l}(t,y)\xrightarrow{\text{}}M_{l}$, as $t\to \infty$, and then check
the Carleman condition $M_{l}\leq l!C^{l}$ for some constant $C$.
The central point in this proof is the observation that $n(t,y)=\sum_{x\in \mathbb{Z}^{d}}n(t,x,y)$,
where the subpopulations $n(t,x,y)$ for different $x\in \mathbb{Z}^{d}$ are independent.
Instead of moments, we will use cumulants and the following properties
of the cumulants: the $k$-th cumulant of a sum of independent random
variables is just the sum of the $k$-th cumulants of the summands. For
the calculation of the $k$-th cumulant of $n(t,x,y)$, we can use the
backward Kolmogorov equations.

In general, even for non-constant $\beta(x)$ and $\mu(x)$, let $V(x)=\beta(x)-\mu(x)$,
all moment equations include the basic operator
\[
\mathit{\mathcal{H}_{a}}=\mathit{\mathcal{L}_{a}+V(x).}
\]
In fact, it is easy to see that the moment generation function of $n(t,x,y)$,
which is $\phi_{z}(t,x,y):=E_{x}z^{n(t,x,y)}$, is the solution of the KPP type non-linear equation:
\begin{eqnarray*}
\frac{\partial\phi_{z}}{\partial t} & = & \mathcal{L}_{a}\phi_{z}+\beta(x)\phi_{z}^{2}-(\beta(x)+\mu(x))\phi_{z}+\mu(x),\\
\phi_{z}(0,x,y) & = & \begin{cases}
z, & x=y,\\
1, & x\neq y.
\end{cases}
\end{eqnarray*}
Denote $m_{l}(t,x,y):=\frac{\partial^{l}\phi_{z}}{\partial z^{l}}|_{z=1}$,
then the first moment satisfies the following equation:
\begin{align*}
\frac{\partial m_{1}}{\partial t} & =  \mathcal{H}_{a}m_{1},\\
m_{1}(0,x,y) & =  \delta_{x}(y).
\end{align*}

In a similar way, for all $l\geq2$, we can find the equation for
the $l$-th factorial moment $m_{l}(t,x,y):=E\left(n(t,x,y)\left(n(t,x,y)-1\right)\cdots\left(n(t,x,y)-l+1\right)\right)$
\begin{align*}
\frac{\partial m_{l}}{\partial t} & = \mathcal{H}_{a}m_{l}+2\beta(x)\sum_{i=1}^{l-1}\left(\begin{array}{c}
l-1\\
i
\end{array}\right)m_{i}m_{l-i}\\
m_{l}(0,x,y) & =  0.
\end{align*}
The first moment equation gives
\[
m_{1}(t,x,y)=q(t,x,y),
\]
where $q(t,x,y)$ is the fundamental solution of the equation
\[
\frac{\partial q(t,x,y)}{\partial t}=\mathcal{H}_{a}q(t,x,y).
\]

If $\beta=\mu$, then $V(x)\equiv0$ and $m_{1}(t,y)\equiv1$. The
density of the global population is constant in time. One can solve
one by one the moment equations and finally prove Theorem 1. For more
details, please refer to \cite{SM_JW_2017}.

But the assumption of criticality $\beta=\mu$ is not realistic. Any
model in population dynamics measured, at least on the qualitative
level, on the similarity with nature bio-systems must be stable with
respect to small random perturbation of the form $\beta(x)=\beta_{0}+\varepsilon\xi(x,w)$,
$\mu(x)=\mu_{0}+\varepsilon\eta(x,w)$, with $\beta_{0}=\mu_{0}$ and
$\omega\in(\varOmega,\mathcal{F},P)$. Consider the simplest case
(a stationary Anderson type model) where $V(x,\omega)=\varepsilon(\beta-\mu)(x,\omega)$
are i.i.d random variable and $P\{V(x)>\delta_{0}\}=\delta_{1}$ for
$\delta_{0}>0$, $\delta_{1}>0$. Due to the Borel\textendash Cantelli
lemma, for arbitrary large $L>0$, there is a cube $Q_{L}(x)=\{x:|a-x|_{\infty}\leq L\}$
such that $V(x)>\delta_{0}$ for $x$$\in Q_{L}(x)$. Then
\[
m_{1}(t,x)\geq\tilde{m}_{1,L}(t,x),
\]
 where
\begin{align*}
\frac{\partial\tilde{m}_{1,L}}{\partial t} & = \mathcal{L}_{a}\tilde{m}_{1,L}+\delta_{0}\tilde{m}_{1,L}=\mathcal{H}_{0}\tilde{m}_{1,L},\\
\tilde{m}_{1,L}|_{\partial Q_{L}} & =  0, \\
\tilde{m}_{1,L}(0,x) & = \psi_{0}(x)=\cos r(x_{1}-a_{1})\cdots \cos r(x_{d}-a_{d}).
\end{align*}
 Here $a=(a_{1},\ldots,a_{d})$, $x=(x_{1},\ldots,x_{d})$ and $r=\frac{\pi}{2L}$. The function
$\psi_{0}(x)=\cos r(x_{1}-a_{1})\cdots \cos r(x_{d}-a_{d})$ is the eigenfunction
of $\mathcal{H}_{0}$ with the eigenvalue $\lambda_{0}\sim-\frac{c}{L^{2}}+\delta_{0}$.
For sufficiently large $L$, this eigenvalue is positive, i.e. $m_{1}(t,x)=\psi_{0}(x)e^{\lambda_{0}t}$
is exponentially growing in time. This discussion indicates that the contact model is unstable with respect to small stationary random perturbations. See details in~\cite{Kondratiev_Molchanov_2016}.

\section{The case of a single-source perturbation}

Let us now consider the local perturbations and concentrate, in this section, on the simplest case:
\[
\beta(x)=\beta_{0}+\sigma I_{\{x=0\} },\quad \sigma>0
\]
 and
\[
\mu(x)=\beta_{0}
\]
 for all $x\in \mathbb{Z}^{d}$, i.e. $V(x)=\sigma\delta_{0}(x)$.

\subsection{Transition probability and the Green function for the operator \textmd{\normalsize{}$\mathcal{L}_{a}$}}
\begin{lemma}
The fundamental solution of
\[
\begin{cases}
\frac{\partial p(t,x,y)}{\partial t} & =\mathcal{L}_{a}p(t,x,y),\\
p(0,x,y) & =\delta_{x}(y),
\end{cases}
\]
is
\begin{eqnarray*}
p(t,x,y) & =\frac{1}{\left(2\pi\right)^{d}}\int_{T^{d}} & e^{t\mathcal{\hat{L}}_{a}(k)}e^{-ik(y-x)}dk,
\end{eqnarray*}
where $k\in T^{d}:=[-\pi,\pi]^{d}$, $\mathcal{\hat{L}}_{a}(k)=-(1-\hat{a}(k))$ and $\hat{a}(k)=\sum_{z\neq0}\cos (k,z)a(z)$.
\end{lemma}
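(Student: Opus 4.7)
The plan is to diagonalize the translation-invariant convolution operator $\mathcal{L}_a$ by the discrete Fourier transform on $\mathbb{Z}^d$, which reduces the Cauchy problem to a scalar ODE parametrized by the frequency $k\in T^d$.

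Concretely, I would first exploit translation invariance of $\mathcal{L}_a$ to write $p(t,x,y)=q(t,y-x)$, where $q$ solves $\partial_t q = \mathcal{L}_a q$ with $q(0,\cdot)=\delta_0$. Next, apply the discrete Fourier transform $\hat q(t,k)=\sum_{y\in\mathbb{Z}^d}q(t,y)e^{-i(k,y)}$. Using $a(z)=a(-z)$ and $\sum_{z\ne0}a(z)=1$ I can compute the symbol of $\mathcal{L}_a$ termwise,
\[
\widehat{\mathcal{L}_a f}(k)=\Bigl(\sum_{z\ne0}a(z)\bigl(e^{i(k,z)}-1\bigr)\Bigr)\hat f(k)=-(1-\hat a(k))\hat f(k)=\hat{\mathcal{L}}_a(k)\hat f(k),
\]
the interchange of summation and Fourier transform being justified by absolute convergence of $\sum_{z\ne0}a(z)$.

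The transformed problem becomes the trivial ODE $\partial_t\hat q(t,k)=\hat{\mathcal{L}}_a(k)\hat q(t,k)$ with $\hat q(0,k)=1$, whose solution is $\hat q(t,k)=e^{t\hat{\mathcal{L}}_a(k)}$. Inverting gives
\[
q(t,y)=\frac{1}{(2\pi)^d}\int_{T^d}e^{t\hat{\mathcal{L}}_a(k)}e^{i(k,y)}\,dk,
\]
and since $\hat{\mathcal{L}}_a(k)\le0$ is real and even in $k$ on the symmetric domain $T^d$, the sign of the exponent in $e^{\pm i(k,y)}$ may be flipped; shifting $y\mapsto y-x$ yields the formula in the lemma.

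To close the argument I would verify that this candidate really is the fundamental solution. Boundedness $|e^{t\hat{\mathcal{L}}_a(k)}|\le 1$ together with $\sum_{z\ne0}a(z)<\infty$ permits differentiation under the integral sign and interchange of the sum in $\mathcal{L}_a$ with the integral, so the PDE holds pointwise; the initial condition reduces to the orthogonality relation $\frac{1}{(2\pi)^d}\int_{T^d}e^{i(k,y)}dk=\delta_0(y)$. Uniqueness in the class of bounded solutions follows from the maximum principle for the Markov semigroup generated by $\mathcal{L}_a$. I do not expect any real obstacle: this is the classical diagonalization of a convolution semigroup on $\mathbb{Z}^d$, and the only step deserving explicit care is the interchange of the lattice sum defining $\mathcal{L}_a$ with the Fourier integral, which is immediate from dominated convergence.
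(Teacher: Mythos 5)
Your proof is correct and is exactly the standard Fourier diagonalization that the paper implicitly relies on (the lemma is stated without proof there, as a classical fact about convolution semigroups on $\mathbb{Z}^{d}$). All the key points are in place: the symbol computation giving $\hat{\mathcal{L}}_{a}(k)=-(1-\hat{a}(k))$, the scalar ODE in Fourier variables, the sign flip justified by evenness of $\hat{a}$, and the verification of the PDE, initial condition, and uniqueness.
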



Note that
\[
p(t,x,y)\leq p(t,x,x)=p(t,0,0) =\frac{1}{\left(2\pi\right)^{d}}\int_{T^{d}} e^{t\hat{L}_{a}(k)}dk.
\]

Let us consider, for  the operator $\mathcal{H}_{a}:=\mathcal{L}_{a}+\sigma\delta_{0}(x)$,  the
following equation
\[
\mathcal{H}_{a}\psi=\lambda\psi,  \psi\in L^{2}\left(\mathbb{Z}^{d}\right),\quad\lambda>0,
\]
that is
\[
\mathcal{L}_{a}\psi+\sigma\delta_{0}(x)\psi=\lambda\psi,
\]
The Fourier transform gives
\[
-(1-\hat{a}(k))\hat{\psi}(k)+\sigma\psi(0)=\lambda\hat{\psi}(k),
\]
 therefore,
\[
\frac{1}{(2\pi)^{d}}\int_{T^{d}}\hat{\psi}(k)dk=\frac{1}{(2\pi)^{d}}\int_{T^{d}}\frac{\sigma\psi(0)}{\lambda+(1-\hat{a}(k)}dk.
\]
Let $I(\lambda)$ be the function
\[
I(\lambda):=\frac{1}{(2\pi)^{d}}\int_{T^{d}}\frac{1}{\lambda+(1-\hat{a}(k)}dk.
\]
Using the fact that $\psi(0)=\frac{1}{(2\pi)^{d}}\int_{T^{d}}\hat{\psi}(k)dk$
we have
\[
\frac{1}{\sigma}=I(\lambda).
\]

Let $G_{\lambda}(x,y)$ denote the Green function of the operator $\mathcal{L}_{a}$,
which is defined as the Laplace transform of the transition probability $p(t,x,y)$:
\[
G_{\lambda}(x,y):=\int_{0}^{\infty}e^{-\lambda t}p(t,x,y)\,dt
\]
for $\lambda\geq0$. We have
\[
G_{\lambda}(x,y) =\int_{0}^{\infty}e^{-\lambda t}p(t,x,y)\,dt=\frac{1}{(2\pi)^{d}}\int_{T^{d}}\frac{e^{-ik(y-x)}}{\lambda+1-\hat{a}(k)}\,dk
\]
and
\[
G_{0}(0,0) =\int_{0}^{\infty}p(t,0,0)\,dt=\frac{1}{(2\pi)^{d}}\int_{T^{d}}\frac{1}{1-\hat{a}(k)}\,dk=I(0).
\]
For the random walk, $G_{0}(0,0)$ denotes the total time that the random walk stays in the origin if it start from 0. If $\sum_{z\in \mathbb{Z}^{d}}a(z)|z|^2 < \infty$, then $G_{0}(0,0) < \infty$ for $d\geq 3$. Please refer to \cite{Yarovaya_2007} for more discussion of $G_{0}(0,0)$.

In the following, we  assume that  $\sum_{z\in \mathbb{Z}^{d}}a(z)|z|^2 < \infty$. It means that the underlying random walk has  a finite variance of jumps.
The asymptotic of the Green function is studied in \cite{Molchanov_Yarovaya_2012,Molchanov_2013}. We recall the following lemma from \cite{Molchanov_Yarovaya_2012}.
\begin{lemma}
\label{lem:Green_function_asym}Suppose that $d\geq3$, assume $\sum_{z\in \mathbb{Z}^{d}}a(z)|z|^2 < \infty$,  then
\begin{eqnarray*}
G_{0}(x,y)\sim\frac{C_{d}}{|y-x|^{d-2}},\quad\text{as}~ |y-x|\rightarrow\infty,
\end{eqnarray*}
where $C_{d}$ is a positive constant.
\end{lemma}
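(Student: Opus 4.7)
The statement is the standard Green-function asymptotic for a mean-zero, finite-variance lattice walk in dimension $d\ge 3$. The plan is to analyse the Fourier representation
\[
G_0(x,y)=\frac{1}{(2\pi)^d}\int_{T^d}\frac{e^{-ik\cdot(y-x)}}{1-\hat a(k)}\,dk
\]
already derived in the excerpt. First I would pin down the denominator. Because $\sum_z a(z)|z|^2<\infty$, $\hat a$ is twice continuously differentiable, and since $a$ is symmetric with mean zero, Taylor expansion at $k=0$ gives
\[
1-\hat a(k)=\tfrac12\,k^\top B\,k+o(|k|^2),\qquad B_{ij}:=\sum_{z}z_iz_j\,a(z),
\]
where $B$ is strictly positive definite by the span assumption on $a$. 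Away from $k=0$, $1-\hat a(k)$ is bounded below by a positive constant on $T^d$, so the only singularity of the integrand sits at the origin and is quadratic.

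The leading contribution is extracted by isolating this singularity. Fix a smooth cut-off $\chi$ equal to $1$ near $0$ and supported in a small ball $B_\delta(0)\subset T^d$ and split
\[
\frac{1}{1-\hat a(k)}=\frac{2\chi(k)}{k^\top B k}+r(k),
\]
so that the leading singular parts cancel and $r\in L^1(T^d)$. The Fourier coefficients of $r$ contribute a correction of lower order than $|y-x|^{-(d-2)}$, while the main term matches, up to a rapidly decaying correction from the cut-off, the classical Riesz-potential identity on $\mathbb R^d$:
\[
\frac{1}{(2\pi)^d}\int_{\mathbb R^d}\frac{2\,e^{-ik\cdot z}}{k^\top B k}\,dk=\frac{\Gamma(\tfrac{d-2}{2})}{2\pi^{d/2}\sqrt{\det B}}\cdot\frac{1}{(z^\top B^{-1}z)^{(d-2)/2}},
\]
valid for $d\ge 3$. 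Setting $z=y-x$ and $\hat z=z/|z|$, this yields
\[
G_0(x,y)\sim\frac{\Gamma(\tfrac{d-2}{2})}{2\pi^{d/2}\sqrt{\det B}\,(\hat z^\top B^{-1}\hat z)^{(d-2)/2}}\cdot\frac{1}{|y-x|^{d-2}},
\]
which is the claimed $C_d/|y-x|^{d-2}$ rate; the prefactor is a true constant in the isotropic case $B\propto I$, and otherwise a bounded direction-dependent factor absorbed into $C_d$.

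The main obstacle is making the step ``$r$ contributes $o(|y-x|^{-(d-2)})$'' rigorous. Under only a finite second moment one does not immediately obtain $C^3$ regularity of $\hat a$, so justifying the $o$ rate for the remainder requires care. A cleaner alternative, and the one followed in the references cited by the paper, is to invoke the local central limit theorem
\[
p(t,0,z)=\frac{1}{(2\pi t)^{d/2}\sqrt{\det B}}\,\exp\!\Bigl(-\tfrac{z^\top B^{-1}z}{2t}\Bigr)+o\!\bigl(t^{-d/2}\bigr)
\]
and compute $G_0(0,z)=\int_0^\infty p(t,0,z)\,dt$ directly: the substitution $s=z^\top B^{-1}z/(2t)$ converts the Gaussian term into a $\Gamma$-integral yielding precisely the constant above, while the error integral is controlled by a small-time/large-time split together with a Chebyshev-type tail bound. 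That analytic accounting is where the main labour of the proof lies.
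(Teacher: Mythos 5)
The paper does not actually prove this lemma: it is imported verbatim from the reference \cite{Molchanov_Yarovaya_2012} (``We recall the following lemma from\ldots''), so there is no internal argument to measure yours against. Your sketch is the standard route to the Green-function asymptotics, and the architecture is right: quadratic nondegenerate singularity of $1-\hat a$ at $k=0$ (positive definiteness of $B$ coming from the span condition), extraction of the Riesz-potential main term, or equivalently integration of the local CLT over time. The constant $\Gamma(\tfrac{d-2}{2})/\bigl(2\pi^{d/2}\sqrt{\det B}\bigr)\,(\hat z^{\top}B^{-1}\hat z)^{-(d-2)/2}$ is correct. You are also right to flag that in the anisotropic case this prefactor depends on the direction of $y-x$, so the lemma's single constant $C_d$ is only literally an asymptotic equivalence under isotropy (or must be read as two-sided bounds).

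However, as written neither of your two routes closes, and the remaining step is not merely bookkeeping. In the Fourier route, $r\in L^{1}(T^{d})$ gives only Riemann--Lebesgue decay $o(1)$ of its Fourier coefficients, whereas you need $o(|y-x|^{-(d-2)})$; that requires roughly $d-1$ integrations by parts, i.e.\ differentiability of $\hat a$ that a finite second moment does not supply (you note this yourself). In the LCLT route, the uniform error $o(t^{-d/2})$ alone is insufficient: $\int_{1}^{\infty}o(t^{-d/2})\,dt$ is a finite constant, not $o(|z|^{2-d})$. One must split at $t\asymp|z|^{2}$; on $t\ge\varepsilon|z|^{2}$ the LCLT error does integrate to $o(|z|^{2-d})$, but on $t<\varepsilon|z|^{2}$ the Chapman--Kolmogorov plus Chebyshev bound $p(t,0,z)\le C\,t^{1-d/2}/|z|^{2}$ is not integrable near $t=0$ when $d\ge5$, and the one-jump contribution $\int_{0}^{1}p(t,0,z)\,dt\gtrsim a(z)$ can genuinely exceed $|z|^{2-d}$ along sparse sequences compatible with $\sum_z a(z)|z|^{2}<\infty$. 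This is the known Uchiyama phenomenon: for $d\ge5$ the stated asymptotics require moment assumptions beyond the second. So your ``Chebyshev-type tail bound'' step is exactly where the proof can fail under the hypotheses as given, and a complete argument must either add a moment/tail condition on $a$ or restrict to $d=3,4$; since the paper leans on the cited reference, you should check and import the precise hypotheses used there rather than treating the remainder estimate as routine.
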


\subsection{The First moment }

From the previous section, for the first
moment $m_{1}(t,x,y)=En(t,x,y)$,  we have the following equation
\begin{align*}
\frac{\partial m_{1}(t,x,y)}{\partial t} & = \mathcal{L}_{a}m_{1}(t,x,y)+\sigma\delta_{0}(x)m_{1}(t,x,y)\\
m_{1}(0,x,y) & =\delta_{x}(y).
\end{align*}
From the Kac-Feyman formula, we have the solution for $m_{1}(t,x,y)$:
\[
m_{1}(t,x,y)  =p(t,x,y)E_{x}\left[e^{\sigma\int_{0}^{t}\delta_{0}(X_{s})ds}|X_{t}=y\right]
\]
Here $X_{t}$ is the underlying random walk with the generator $\mathcal{L}_{a}$
and $E_{x}$ is the expectation under the condition that the random
walk starts at the point $x$.

Let $m_{1}(t,y)=En(t,y)=\sum_{x\in \mathbb{Z}^{d}}m_{1}(t,x,y)$, then
\begin{align*}
\frac{\partial m_{1}(t,y)}{\partial t} & = \mathcal{L}_{a}m_{1}(t,y)+\sigma\delta_{0}(x)m_{1}(t,y),\\
m_{1}(0,y) & =  1.
\end{align*}
Then
\[
m_{1}(t,0)  =E_0\left[e^{\sigma\int_{0}^{t}\delta_{0}(X_{s})ds}\right]
\]
As $t\rightarrow\infty$,  we  find $E_0\left[e^{\sigma\int_{0}^{\infty}\delta_{0}(X_{s})ds}\right]$ in the following explicit form:
\begin{equation}\label{equation: formulam_1_infinity}
E_0\left[e^{\sigma\int_{0}^{\infty}\delta_{0}(X_{s})ds}\right]=\frac{1}{1-\sigma G_{0}(0,0)}.
\end{equation} 
From  the  last formula we obtain the following lemma.
\begin{lemma}
\label{lem:Lemma eigenvalue} Suppose that $d\geq3$,  then $E_0\left[e^{\sigma\int_{0}^{\infty}\delta_{0}(X_{s})ds}\right]<\infty$ if and
only if $\sigma< G^{-1}_0(0,0)$.
\end{lemma}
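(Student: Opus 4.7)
Since equation~(\ref{equation: formulam_1_infinity}) already identifies the expectation with the closed form $\frac{1}{1-\sigma G_0(0,0)}$, the lemma reduces to determining when this right-hand side is finite. The hypothesis $d\ge 3$ together with the standing assumption $\sum_{z}a(z)|z|^2<\infty$ guarantees, via Lemma~\ref{lem:Green_function_asym} (equivalently, via transience of the walk), that $G_0(0,0)=I(0)\in(0,\infty)$, so $G_0^{-1}(0,0)$ is a well-defined positive number.

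From here I would check both implications separately. For the ``if'' direction, fix $\sigma<G_0^{-1}(0,0)$; then $1-\sigma G_0(0,0)>0$, and (\ref{equation: formulam_1_infinity}) gives $E_0[e^{\sigma\int_0^\infty\delta_0(X_s)\,ds}]=(1-\sigma G_0(0,0))^{-1}<\infty$. For the converse, set $L_\infty:=\int_0^\infty\delta_0(X_s)\,ds$ and use monotonicity of $\sigma\mapsto E_0[e^{\sigma L_\infty}]$: applying monotone convergence along $\sigma'\uparrow G_0^{-1}(0,0)$ to (\ref{equation: formulam_1_infinity}) shows that the expectation blows up at the boundary $\sigma=G_0^{-1}(0,0)$, and monotonicity then forces $E_0[e^{\sigma L_\infty}]=+\infty$ for every $\sigma\ge G_0^{-1}(0,0)$.

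If it is felt that the formula (\ref{equation: formulam_1_infinity}) itself requires a self-contained justification inside the proof, the probabilistic derivation I would sketch is that $L_\infty$ is an exponential random variable. The continuous-time walk holds at the origin for an independent $\mathrm{Exp}(1)$ time on each visit, and the number of visits starting from $0$ is geometric with some parameter $p$, where $1-p$ is the return probability to the origin. Hence $L_\infty$ is a sum of a geometric number of i.i.d.\ $\mathrm{Exp}(1)$ holding times, which is again exponential: $L_\infty\sim\mathrm{Exp}(p)$. Matching $E_0[L_\infty]=1/p$ with $\int_0^\infty p(t,0,0)\,dt=G_0(0,0)$ pins down $p=1/G_0(0,0)$, and the moment generating function of $\mathrm{Exp}(1/G_0(0,0))$ at $\sigma$ is exactly $(1-\sigma G_0(0,0))^{-1}$, as claimed.

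The main obstacle is essentially nil: the only genuine care is the treatment of the boundary $\sigma=G_0^{-1}(0,0)$, which I would dispatch by the monotone convergence argument above; every other step is direct inspection of the closed-form expression, made legitimate by the finiteness $G_0(0,0)<\infty$ supplied by Lemma~\ref{lem:Green_function_asym}.
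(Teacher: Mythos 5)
Your proof is correct and follows essentially the same route as the paper, whose entire argument is to deduce the lemma from the closed-form identity~(\ref{equation: formulam_1_infinity}) (``from the last formula we obtain the following lemma''). You in fact supply two details the paper leaves implicit---the monotone-convergence argument giving divergence at and beyond the threshold $\sigma=G_0^{-1}(0,0)$, where the formula ceases to be literally valid, and the geometric-sum-of-exponentials derivation of the formula itself---and both are standard and correct.
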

\begin{theorem}
If  $d\ge3$ and $\sigma< G^{-1}_{0}(0,0)$  then we obtain
\[
m_{1}(t,0)\xrightarrow[t\rightarrow\infty]{}m_{1}(\infty,0)=E_{0}\left[e^{\sigma\int_{0}^{\infty}\delta_{0}(X_{s})\,ds}\right].
\]
\end{theorem}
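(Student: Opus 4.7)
The plan is to derive the convergence directly from the Feynman--Kac representation
\[
m_{1}(t,0) = E_{0}\!\left[e^{\sigma\int_{0}^{t}\delta_{0}(X_{s})\,ds}\right],
\]
which has already been established in the excerpt, via a monotone convergence argument. Writing $A_{t}:=\int_{0}^{t}\delta_{0}(X_{s})\,ds$ and $A_{\infty}:=\int_{0}^{\infty}\delta_{0}(X_{s})\,ds$, I first observe that, because $\delta_{0}\geq 0$, the map $t\mapsto A_{t}$ is pathwise nondecreasing; hence $e^{\sigma A_{t}}$ is a nondecreasing, nonnegative random variable with almost sure pointwise limit $e^{\sigma A_{\infty}}$ as $t\to\infty$.

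Applying the monotone convergence theorem then yields the interchange of limit and expectation,
\[
\lim_{t\to\infty} m_{1}(t,0)\;=\;\lim_{t\to\infty}E_{0}\bigl[e^{\sigma A_{t}}\bigr]\;=\;E_{0}\bigl[e^{\sigma A_{\infty}}\bigr],
\]
and the hypothesis $\sigma < G_{0}^{-1}(0,0)$, together with Lemma~\ref{lem:Lemma eigenvalue} and the explicit identity \eqref{equation: formulam_1_infinity}, guarantees that the right-hand side is finite and equal to $(1-\sigma G_{0}(0,0))^{-1}$. Transience (ensured by $d\geq 3$ and the finite second moment of the step distribution) further gives $A_{\infty}<\infty$ almost surely, so the limiting random variable $e^{\sigma A_{\infty}}$ is itself finite almost surely, which is consistent with integrability.

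I expect no serious obstacle: the argument depends only on the pathwise monotonicity of $A_{t}$ and on the a priori finiteness of $E_{0}[e^{\sigma A_{\infty}}]$ under the smallness assumption on $\sigma$. The only conceivable subtlety is to justify the Feynman--Kac representation for all $t<\infty$, but this is immediate because the potential $\sigma\delta_{0}$ is bounded and nonnegative. The whole theorem thus collapses to the observation that $t\mapsto E_{0}[e^{\sigma A_{t}}]$ is monotone nondecreasing in $t$ with a finite limit enforced by the hypothesis on $\sigma$, so that the steady state $m_{1}(\infty,0)$ is attained as a genuine almost sure limit of the occupation-time exponentials.
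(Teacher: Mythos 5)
Your proof is correct and follows essentially the same route the paper intends: the paper offers no separate argument for this theorem beyond the Feynman--Kac identity $m_{1}(t,0)=E_{0}\bigl[e^{\sigma\int_{0}^{t}\delta_{0}(X_{s})\,ds}\bigr]$ together with the closed-form evaluation of its $t\to\infty$ limit, and your monotone convergence step (using that $t\mapsto\int_{0}^{t}\delta_{0}(X_{s})\,ds$ is pathwise nondecreasing and that the limit is finite precisely when $\sigma<G_{0}^{-1}(0,0)$) is exactly the justification left implicit there. No gaps.
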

%

From the Kac-Feyman formula, we have the solution for $m_{1}(t,x,y)$:
\[
m_{1}(t,x,y)  =p(t,x,y)E_{x}\left[e^{\sigma\int_{0}^{t}\delta_{0}(X_{s})ds}|X_{t}=y\right].
\]
First we  prove the following lemma.
\begin{lemma}\label{L4}\sloppy
The following inequalities are valid:  $m_{1}(t,x,0)\leq m_{1}(t,0,0)$ and $m_{1}(t,x,y)\leq m_{1}(t,0,0)$.
\end{lemma}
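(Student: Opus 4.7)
My plan is to convert the evolution equation for $m_1$ into an integral equation via Duhamel's principle, and then exploit the already-noted pointwise bound $p(t,x,y)\le p(t,0,0)$ to propagate the inequality. Fixing $y$ and regarding $u(t,x)=m_1(t,x,y)$ as the solution of $\partial_t u=\mathcal{L}_a u+\sigma\delta_0(x)u$ with $u(0,x)=\delta_x(y)$, Duhamel's formula with respect to the free semigroup $p$ yields
\begin{equation}
m_1(t,x,y)=p(t,x,y)+\sigma\int_0^t p(t-s,x,0)\,m_1(s,0,y)\,ds. \label{eq:duhamel}
\end{equation}
I note that $m_1\ge 0$ and $p\ge 0$ from their probabilistic interpretations, so every term on the right-hand side of \eqref{eq:duhamel} is nonnegative.

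For the first inequality, I would specialize \eqref{eq:duhamel} to $y=0$ and bound each of the two nonnegative terms termwise using $p(\tau,x,0)\le p(\tau,0,0)$:
\[
m_1(t,x,0)\le p(t,0,0)+\sigma\int_0^t p(t-s,0,0)\,m_1(s,0,0)\,ds=m_1(t,0,0),
\]
where the last equality is just \eqref{eq:duhamel} evaluated at $x=y=0$. For the second inequality, I would first establish the symmetry $m_1(s,0,y)=m_1(s,y,0)$, which follows from time reversal for the symmetric random walk $X_s$: since $a(z)=a(-z)$, the law of the bridge from $0$ to $y$ on $[0,t]$ coincides with that of the bridge from $y$ to $0$, and the occupation time $\int_0^t\delta_0(X_s)\,ds$ is invariant under the reversal $s\mapsto t-s$. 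Combining this symmetry with the first inequality gives $m_1(s,0,y)\le m_1(s,0,0)$, and one more termwise application of $p(\cdot,x,\cdot)\le p(\cdot,0,\cdot)$ in \eqref{eq:duhamel} then yields $m_1(t,x,y)\le m_1(t,0,0)$.

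The only delicate point is justifying the symmetry $m_1(t,x,y)=m_1(t,y,x)$; as a backup, I can iterate \eqref{eq:duhamel} into a Neumann series whose summands $\sigma^n\int_{0<s_1<\cdots<s_n<t}p(s_1,x,0)\prod_{i=2}^{n}p(s_i-s_{i-1},0,0)\,p(t-s_n,0,y)\,ds_1\cdots ds_n$ are manifestly symmetric under $x\leftrightarrow y$ because $p(t,a,b)=p(t,b,a)$. The remainder of the argument is straightforward monotonicity of a nonnegative integral expression with respect to the pointwise bound on $p$.
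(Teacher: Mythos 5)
Your proof is correct, but it takes a genuinely different route from the paper's. The paper works directly with the Feynman--Kac representation $m_1(t,x,0)=p(t,x,0)\,E_x\bigl[e^{\sigma\int_0^t\delta_0(X_s)\,ds}\mid X_t=0\bigr]$: it introduces the first hitting time $\tau_{x,0}$ of the origin, notes that the occupation-time integral vanishes before $\tau_{x,0}$, applies the Markov property to reduce the conditional expectation to one started at $0$ over the shorter window $[0,t-\tau_{x,0}]$, and then combines monotonicity of the exponential functional in the time horizon with $p(t,x,0)\le p(t,0,0)$. You replace all of this by the Duhamel integral equation and termwise monotonicity in the kernel $p$, at the price of needing the symmetry $m_1(t,x,y)=m_1(t,y,x)$ --- which the paper itself asserts (without proof) immediately after the lemma, and which your Neumann-series expansion justifies cleanly, since each summand involves only factors $p(\cdot,x,0)$, $p(\cdot,0,0)$, $p(\cdot,0,y)$ and is manifestly symmetric under $x\leftrightarrow y$. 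Your route has a small advantage on the second inequality: the paper bounds $m_1(t,x,y)\le p(t,x,y)A_t$ with the unconditioned expectation $A_t=E_0\bigl[e^{\sigma\int_0^t\delta_0(X_s)\,ds}\bigr]$ and then invokes $p(t,x,y)\le p(t,0,0)$, which strictly speaking still requires comparing $A_t$ with the conditional expectation appearing in $m_1(t,0,0)$; your termwise estimate inside the integral equation sidesteps that comparison entirely.
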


\begin{proof}
Denote
\[
\tau_{x,0}:=inf\,\{t\,|X_{t}=0|X_{0}=x\}.
\]
Then
\begin{eqnarray*}
m_{1}(t,x,0) & = & p(t,x,0)E_{x}\left[e^{\sigma\int_{0}^{t}\delta_{0}(X_{s})ds}|X_{t}=0\right]\\
 & = & p(t,x,0)E_{x}\left[e^{\sigma\int_{0}^{\tau_{x,0}}\delta_{0}(X_{s})ds}e^{\sigma\int_{\tau_{x,0}}^{t}\delta_{0}(X_{s})ds}|X_{t}=0\right]\\
 & = & p(t,x,0)E_{0}\left[e^{\sigma\int_{\tau_{x,0}}^{t-\tau_{x,0}}\delta_{0}(X_{s})ds}|X_{t}=0\right]\\
 & = & p(t,x,0)E_{0}\left[e^{\sigma\int_{0}^{t-\tau_{x,0}}\delta_{0}(X_{s})ds}|X_{t}=0\right]\\
 & \leq & p(t,0,0)E_{0}\left[e^{\sigma\int_{0}^{t}\delta_{0}(X_{s})ds}|X_{t}=0\right]\\
 & = & m_{1}(t,0,0).
\end{eqnarray*}
Here the proof of the second equality uses the fact that before the moment $\tau_{0,x}$, the random
walk does not reach zero, thus $\int_{0}^{t}\delta_{0}(X_{s})ds=0$. The
third equality is based on the Markov property of the random walk and the
second last equality uses the definition of $m(t,x,y)$.
In a similar way, we can prove that
\begin{align*}
m_{1}(t,x,y)\leq p(t,x,y)E_{0}\left[e^{\sigma\int_{0}^{t}\delta_{0}(X_{s})ds}.\right]
\end{align*}
Denote $A_{t}:=E_{0}\left[e^{\sigma\int_{0}^{t}\delta_{0}(X_{s})ds}\right]$,
that is
\[
m_{1}(t,x,y)\leq p(t,x,y)A_{t}
\]
Due to the fact that $p(t,x,y)\leq p(t,0,0)$, we have
\[
 m_{1}(t,x,y)\leq m_{1}(t,0,0).
\]
\end{proof}

Note that $m_{1}(t,x,y)= m_{1}(t,y,x)$. The estimation $m_{1}(t,x,y)\leq m_{1}(t,0,0)$ gives the possibility to extend the method from \cite{SM_JW_2017} to a local perturbation situation.

\subsection{Higher moments}

As we know, for $l\ge2$, the $l$-th factorial moment satisfies:
\begin{align*}
\frac{\partial m_{l}}{\partial t}&=  \mathcal{L}_{a}m_{l}+\sigma\delta_{0}(x)m_{l}+2(\mu+\sigma\delta_{0}(x))\sum_{i=1}^{l-1}
\left(\begin{array}{c}
l-1\\
i
\end{array}\right)m_{i}m_{l-i}\\
m_{l}(0,x,y)&=0.
\end{align*}

We will prove the estimates given below for the factorial moment.
\begin{theorem}
\label{thm: single_source_moment}For $d\ge3$, we have
\[
m_{l}(t,x,y)\leq A^{l-1}B^{l}l!p(t,x,y),
\]
where $A:=E_{0}\left[e^{\sigma\int_{0}^{\infty}\delta_{0}(X_{s})ds}\right]$
and $B:=2(\mu+\sigma)G_{0}(0,0)$.
\end{theorem}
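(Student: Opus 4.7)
The plan is to prove the inequality by induction on $l$. For the base case $l=1$, the proof of Lemma~\ref{L4} actually establishes $m_{1}(t,x,y)\le A_{t}\,p(t,x,y)$ with $A_{t}=E_{0}[e^{\sigma\int_{0}^{t}\delta_{0}(X_{s})ds}]\nearrow A$; letting $t$ grow gives $m_{1}(t,x,y)\le A\,p(t,x,y)$, which (bundled with a factor of $B$ absorbed from the induction machinery below) covers the stated bound at $l=1$.

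For the inductive step, suppose $m_{j}(t,x,y)\le A^{j-1}B^{j}j!\,p(t,x,y)$ for all $1\le j<l$. Since $m_{l}(0,\cdot,\cdot)=0$, Duhamel's formula for the $l$-th moment equation yields
\[
m_{l}(t,x,y)=\int_{0}^{t}\sum_{z\in\mathbb{Z}^{d}}m_{1}(t-s,x,z)\cdot 2(\mu+\sigma\delta_{0}(z))\sum_{i=1}^{l-1}\binom{l-1}{i}m_{i}(s,z,y)m_{l-i}(s,z,y)\,ds.
\]
I would bound the kernel by $m_{1}(t-s,x,z)\le A\,p(t-s,x,z)$ (the $l=1$ case) and bound the product $m_{i}m_{l-i}$ by applying the induction hypothesis to one factor and the pointwise inequality $p(s,z,y)\le p(s,0,0)$ to the other, which produces $m_{i}m_{l-i}\le A^{l-2}B^{l}\,i!(l-i)!\,p(s,z,y)p(s,0,0)$.

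The spatial sum then decouples into two pieces. The $\mu$-part collapses by Chapman--Kolmogorov to $\mu\, p(t,x,y)$. The $\sigma\delta_{0}(z)$-part yields $\sigma\, p(t-s,x,0)p(s,0,y)$, which is itself bounded by $\sigma\, p(t,x,y)$ upon retaining only the $z=0$ term in the identity $p(t,x,y)=\sum_{z}p(t-s,x,z)p(s,z,y)$. Together the spatial factor is at most $(\mu+\sigma)\,p(t,x,y)$, and the time integral $\int_{0}^{t}p(s,0,0)\,ds\le G_{0}(0,0)<\infty$ (transience in $d\ge 3$, Lemma~\ref{lem:Green_function_asym}). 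These two combine into exactly the constant $B=2(\mu+\sigma)G_{0}(0,0)$.

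The main obstacle is absorbing the residual combinatorial sum $\sum_{i=1}^{l-1}\binom{l-1}{i}i!(l-i)!$ into the target $l!$. The naive evaluation of this sum overshoots $l!$ by a linear-in-$l$ factor, so one must exploit a more symmetric splitting of the product $m_{i}m_{l-i}$ (for instance distributing the $p(s,z,y)\le p(s,0,0)$ estimate equally between the two factors, or treating the $\mu$-term and the $\sigma\delta_{0}$-term separately and grouping the resulting convolutions) in order to recover the clean $l!$ appearing in the claim. Once this combinatorial step is closed, the induction propagates immediately and delivers the bound for all $l\ge 1$.
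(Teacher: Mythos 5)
Your overall architecture --- induction on $l$, Duhamel's formula, the bound $m_1(t,x,y)\le A\,p(t,x,y)$ for the kernel, $p(s,z,y)\le p(s,0,0)$ applied to one factor of the product, Chapman--Kolmogorov to collapse the spatial sum to $p(t,x,y)$, and $\int_0^t p(s,0,0)\,ds\le G_0(0,0)$ from transience --- is exactly the paper's. But the combinatorial obstacle you flag at the end is a genuine gap that you do not close, and the fixes you sketch cannot close it. The quantity
\[
\sum_{i=1}^{l-1}\binom{l-1}{i}\,i!\,(l-i)! \;=\;(l-1)!\sum_{i=1}^{l-1}(l-i)\;=\;\binom{l}{2}(l-1)!\;=\;\frac{(l-1)}{2}\,l!
\]
is a purely combinatorial overshoot: it appears identically no matter how you distribute the estimate $p(s,z,y)\le p(s,0,0)$ between the two factors $m_i$ and $m_{l-i}$, and no matter whether you treat the $\mu$- and $\sigma\delta_0$-pieces of the potential separately, because those manipulations only affect the analytic prefactors, not the coefficients $\binom{l-1}{i}\,i!\,(l-i)!$. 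So the induction hypothesis ``$m_j\le A^{j-1}B^j j!\,p$'' does not propagate (it already fails to reproduce itself at $l=4$, where $\binom{l}{2}(l-1)!=36>24=l!$), and as written your proof does not terminate.

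The paper's resolution, which you are missing, is to change the combinatorial weight in the induction hypothesis: run the induction with $m_j\le(\mathrm{const})^{j}D_j\,p$, where $D_1=1$ and $D_l=\sum_{i=1}^{l-1}\binom{l-1}{i}D_iD_{l-i}$ is defined so as to satisfy the arising recursion \emph{exactly}, with no loss. One then bounds $D_l$ a posteriori via its exponential generating function $D(z)=\sum_l D_l z^l/l!$, which satisfies an algebraic (quadratic) equation and hence has a positive radius of convergence; this gives $D_l\le c^l\,l!$ for some universal $c$ (the paper quotes $D_l<4^l l!$ via a Catalan-number identification). The factor $c^l$ is then absorbed into the geometric constant $B^l$, which is all the Carleman condition requires. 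I would add that the paper's own bookkeeping of the constants $A$ and $B$ in this argument is itself not fully consistent (the final display gives $A^{l+1}B^{l-1}D_l$ rather than the stated $A^{l-1}B^l l!$), but the Catalan-type recursion is the essential idea that your proposal lacks; without it, or some equivalent device, the stated bound of the form $C^l\,l!\,p(t,x,y)$ is not reached.
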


\begin{proof}
We will use the mathematical induction to prove the theorem. For $l=2$,
the second moment satisfies
\begin{align*}
\frac{\partial m_{2}}{\partial t} & = \mathcal{L}_{a}m_{2}+\sigma\delta_{0}(x)m_{2}+2(\mu+\sigma\delta_{0}(x))m_{1}^{2}\\
m_{2}(0,x,y) & = 0.
\end{align*}
From Duhamel's formula, we have
\[
\begin{split}m_{2}(t,x,y)& = 2(\mu+\sigma\delta_{0}(x))\int_{0}^{t}\sum_{v\in\mathbb{Z}^{d}}m_{1}(t-s,x,v)m_{1}^{2}(s,v,y)\,ds\\
& \leq  2(\mu+\sigma)\int_{0}^{t}\sum_{v\in\mathbb{Z}^{d}}p(t-s,x,v)A_{t-s}p^{2}(s,v,y)A_{s}^{2}\,ds\\
& \leq  2(\mu+\sigma)\int_{0}^{t}\sum_{v\in\mathbb{Z}^{d}}p(t-s,x,v)A_{t}p^{2}(s,v,y)A_{s}\,ds\\
& \leq  2(\mu+\sigma)A^{2}\int_{0}^{t}\sum_{v\in\mathbb{Z}^{d}}p(t-s,x,v)p^{2}(s,v,y)ds\\
& \leq  2(\mu+\sigma)A^{2}\int_{0}^{t}p(s,0,0)\sum_{v\in\mathbb{Z}^{d}}p(t-s,x,v)p(s,v,y)ds\\
& \leq  2(\mu+\sigma)A^{2}p(t,x,y)\int_{0}^{t}p(s,0,0)ds\\
& \leq  2(\mu+\sigma)A^{2}p(t,x,y)G_{0}(0,0)\\
& \leq  2BA^{2}p(t,x,y).
\end{split}
\]
To prove this chain of inequalities we use the following facts.

\begin{enumerate}
\item $A_{t}\leq A$ and $A_{t-s}A_{s}\leq A$ because
\[
A_{t}=E_{0}\left[e^{\sigma\int_{0}^{t}\delta_{0}(X_{s})ds}\right]
\]
and $A=E_{0}\left[e^{\sigma\int_{0}^{\infty}\delta_{0}(X_{s})ds}\right]$,
\item $p(s,v,y)\leq p(s,0,0)$,
\item $\sum_{v\in\mathbb{Z}^{d}}p(t-s,x,v)p(s,v,y)=p(t,x,y)$.
\end{enumerate}
We will prove that
\[
m_{l-1}(t,x,y)\leq A^{l}B^{l-1}D_{l}p(t,x,y),
\]
where the sequence $D_{l}$ is recurrently defined as
\[
\begin{split}  D_{1}&=1,\\
  D_{l}&=\sum_{i=1}^{l-1}{l-1 \choose i}D_{i}D_{l-i}.
\end{split}
\label{D_l}
\]
Note that (\ref{D_l}) defines the sequence of the Catalan numbers, thus,
the exponential generating function for this sequence $D(z):=\sum_{l=1}^{\infty}D_{l}\frac{z^{l}}{l!}$
from (\ref{D_l}) satisfies $D^{2}(z)+z=D(z)$ or $D(z)=\frac{1-\sqrt{1-4z}}{2}$.
The last equality means that the $l$-th coefficient of $D(z)$ grows no faster
than $4^{l}$ or, equivalently, $D_{l}<4^{l}l!$.

For $l\ge2$, the $l$-th factorial moment satisfies:
\begin{align*}
\frac{\partial m_{l}}{\partial t} & = \mathcal{L}_{a}m_{l}+\sigma\delta_{0}(x)m_{l}+2(\mu+\sigma\delta_{0}(x))\sum_{i=1}^{l-1}\left(\begin{array}{c}
l-1\\
i
\end{array}\right)m_{i}m_{l-i},\\
m_{l}(0,x,y) & =  0.
\end{align*}
After applying Duhamel's formula, we have
\[
\begin{split}m_{l}(t,x,y)& = 2(\mu+\sigma\delta_{0}(x))\int_{0}^{t}\sum_{v\in\mathbb{Z}^{d}}m_{1}(t-s,x,v)\\
&\qquad\times\sum_{i=1}^{l-1}{l-1 \choose i}m_{i}(s,v,y)m_{l-i}(s,v,y)\,ds\\
& \leq 2(\mu+\sigma)D_{l}\int_{0}^{t}\sum_{v\in\mathbb{Z}^{d}}p(t-s,x,v)A_{t-s}A^{i}B^{i-1}\\
&\qquad\times p(s,v,y)A^{l-i}B^{l-i-1}p(s,v,y)ds\\
& \leq 2(\mu+\sigma)A^{l}B^{l-2}D_{l}\int_{0}^{t}\sum_{v\in\mathbb{Z}^{d}}p(t-s,x,v)A_{t-s}p^{2}(s,v,y)\,ds\\
& \leq 2(\mu+\sigma)A^{l+1}B^{l-2}D_{l}\int_{0}^{t}\sum_{v\in\mathbb{Z}^{d}}p(t-s,x,v)p^{2}(s,v,y)ds\\
& \leq 2(\mu+\sigma)A^{l+1}B^{l-2}D_{l}\int_{0}^{t}p(s,0,0)\sum_{v\in\mathbb{Z}^{d}}p(t-s,x,v)p(s,v,y)ds\\
&\leq 2(\mu+\sigma)A^{l+1}B^{l-2}D_{l}p(t,x,y)\int_{0}^{t}p(s,0,0)ds\\
&\leq A^{l+1}B^{l-1}D_{l}p(t,x,y).
\end{split}
\]
\end{proof}

\begin{theorem}\label{T4}
\label{main result} Let $N(t,y)$, $y\in\mathbb{Z}^{d}$, be a random
field as described above, and consider the single-source perturbation
case, that is, $\beta-\mu=\sigma\delta_{0}(x)$ with $\sigma >0$. If d$\ge3$ and $\sigma< G^{-1}_0(0,0)$,
then for all $y\in\mathbb{Z}^{d}$ we have
\[
N(t,y)\xrightarrow{\text{Law}}N(\infty,y).
\]
\end{theorem}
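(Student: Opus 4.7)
The plan is to apply the classical method of moments (Frechet--Shohat), adapted to the branching structure exactly as in the proof of Theorem~\ref{T1}. The key observation is that the subpopulations $n(t,x,y)$, $x\in\mathbb{Z}^{d}$, are mutually independent, so cumulants (and hence moments via combinatorial identities) decompose additively over $x$. Combined with the per-subpopulation bound from Theorem~\ref{thm: single_source_moment}, this gives both a Carleman-type estimate and a route to pass to the limit $t\to\infty$.

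First I would sum the estimate of Theorem~\ref{thm: single_source_moment} over $x$ and use that $p(t,\cdot,\cdot)$ is a probability kernel, $\sum_{x\in\mathbb{Z}^{d}}p(t,x,y)=1$, to obtain
\[
m_{l}(t,y)=\sum_{x\in\mathbb{Z}^{d}}m_{l}(t,x,y)\;\leq\;A^{l-1}B^{l}\,l!,
\]
uniformly in $t$. Converting these factorial moments to ordinary moments $M_{l}(t,y):=\Expect N^{l}(t,y)$ keeps the $l!\,C^{l}$ form, so the Carleman condition $\sum_{l}M_{2l}(t,y)^{-1/(2l)}=\infty$ holds uniformly in $t$; any limit law will therefore be uniquely determined by its moments.

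Second, I would show $m_{l}(t,y)\to M_{l}(y)$ as $t\to\infty$ by induction on $l$. The base case $l=1$ follows from the Feynman--Kac representation $m_{1}(t,y)=\Expect_{y}\bigl[e^{\sigma\int_{0}^{t}\delta_{0}(X_{s})\,ds}\bigr]$, which converges by monotone convergence to $\Expect_{y}\bigl[e^{\sigma\int_{0}^{\infty}\delta_{0}(X_{s})\,ds}\bigr]$; finiteness of the limit comes from the strong Markov property at the first hitting time $\tau_{y,0}$ together with Lemma~\ref{lem:Lemma eigenvalue} and the hypothesis $\sigma<G_{0}^{-1}(0,0)$. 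For the inductive step, apply Duhamel's formula to the $l$-th factorial moment $m_{l}(t,x,y)$, substitute the inductive hypothesis for lower-order moments, and pass to the limit under the integral/sum using dominated convergence. The dominating function is exactly the one used in the proof of Theorem~\ref{thm: single_source_moment}: after one factor $p(s,v,y)\leq p(s,0,0)$ and the semigroup identity $\sum_{v}p(t-s,x,v)p(s,v,y)=p(t,x,y)$, the remaining $s$-integral is controlled by $\int_{0}^{\infty}p(s,0,0)\,ds=G_{0}(0,0)<\infty$, valid for $d\geq 3$. Summing the resulting pointwise limits in $x$ and again invoking $\sum_{x}p(t,x,y)=1$ together with Lemma~\ref{L4} to dominate each term yields convergence of $m_{l}(t,y)$.

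Combining Steps~1 and~2 with Frechet--Shohat delivers $n(t,y)\xrightarrow{\mathrm{Law}}n(\infty,y)$, the limit being the unique distribution with moments $\{M_{l}(y)\}$. The main obstacle is Step~2: Theorem~\ref{thm: single_source_moment} only provides $t$-uniform boundedness of the moments, whereas genuine convergence requires that the Duhamel integrals stabilise. The decisive input is Lemma~\ref{L4}, which yields $m_{1}(t,x,y)\leq m_{1}(t,0,0)\leq A$ and thereby allows us to absorb the local perturbation $\sigma\delta_{0}(x)$ and justify dominated convergence in both the sum over $x$ and the integral over $s$. Once this step is carried out, the moment convergence propagates by induction and the theorem follows.
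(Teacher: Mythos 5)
Your overall strategy --- method of moments over the independent subpopulations $n(t,x,y)$, the uniform bound from Theorem~\ref{thm: single_source_moment}, the Carleman condition, and Fr\'echet--Shohat --- is exactly the route the paper intends; its own ``proof'' is a one-line deferral to Lemma~\ref{L4} and the argument of \cite{SM_JW_2017}, whose scheme is sketched in the introduction. So the approach is the right one. However, your Step~1 contains a genuine error: the identity
\[
m_{l}(t,y)=\sum_{x\in\mathbb{Z}^{d}}m_{l}(t,x,y)
\]
is false for $l\ge 2$. Factorial moments (and ordinary moments) of a sum of independent random variables are \emph{not} additive; already for $l=2$, with $N=\sum_x n_x$ one has $\Expect[N(N-1)]=\sum_x \Expect[n_x(n_x-1)]+\sum_{x\neq x'}\Expect[n_x]\,\Expect[n_{x'}]$, and the cross terms do not vanish. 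This is precisely why the paper insists on working with cumulants: only the $k$-th cumulant of $N(t,y)$ equals the sum over $x$ of the $k$-th cumulants of $n(t,x,y)$. You mention this in your opening paragraph and then abandon it in the actual estimate, so as written Step~1 does not establish the Carleman bound for $\Expect N^{l}(t,y)$.

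The gap is repairable along the lines you gesture at: bound the $l$-th cumulant of each $n(t,x,y)$ by a combinatorial polynomial in the factorial moments $m_{j}(t,x,y)$, $j\le l$, use Theorem~\ref{thm: single_source_moment} together with $\sum_{x}p(t,x,y)=1$ and $p(t,x,y)\le 1$ to show $\sum_{x}|\kappa_{l}(n(t,x,y))|\le l!\,C^{l}$ uniformly in $t$, and then convert cumulants of $N(t,y)$ back to moments (which again costs only a factor of the form $l!\,\tilde C^{l}$). The same remark applies to Step~2: what converges additively as $t\to\infty$ are the cumulants of $N(t,y)$, not the sums $\sum_{x}m_{l}(t,x,y)$ interpreted as factorial moments of the total population. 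Your dominated-convergence argument for the Duhamel integrals, resting on Lemma~\ref{L4}, $\sum_{v}p(t-s,x,v)p(s,v,y)=p(t,x,y)$ and $\int_{0}^{\infty}p(s,0,0)\,ds=G_{0}(0,0)<\infty$ for $d\ge 3$, is sound once it is applied to the correct (cumulant) quantities.
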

The proof of Theorem~\ref{T4} is based on Lemma~\ref{L4} and is completely identical to the proof of the main result from~\cite{SM_JW_2017}, which also uses the inequality $p(t,x,y)\leq p(t,0,0)$.

\section{The case of multiple-source perturbations}

Now, let us consider a more general case of perturbations. Assume that
each particle during the time interval $(t,t+dt)$ can
die with probability $\mu(x)\,dt$ or split, at the same point $x\in \mathbb{Z}^{d}$, into two particles with
probability $\beta(x)\,dt$. We assume
that $\mu(x)\equiv\mu$ and $\beta(x)=\mu+\sum_{i=1}^{k}\sigma_{i}\delta_{x_{i}}(x)$ with $\sigma_{i} > 0$. In this case
the first moment satisfies the following equation
\begin{align*}
\frac{\partial m_{1}}{\partial t} & = \mathcal{L}_{a}m_{1}+\sum_{i=1}^{k}\sigma_{i}\delta_{x_{i}}(x)m_{1},\\
m_{1}(0,x,y) & = \delta_{x}(y).
\end{align*}
In a similar way,  for the $l$-th factorial moment
\[
m_{l}(t,x,y):=E\left(n(t,x,y)\left(n(t,x,y)-1\right)\cdots\left(n(t,x,y)-l+1\right)\right), \quad l\geq2,
\]
we get the equation
\begin{align*}
\frac{\partial m_{l}}{\partial t} & = \mathcal{L}_{a}m_{l}+\sum_{i=1}^{k}\sigma_{i}\delta_{x_{i}}(x)m_{l}+2\beta(x)\sum_{i=1}^{l-1}\left(\begin{array}{c}
l-1\\
i
\end{array}\right)m_{i}m_{l-i},\\
m_{l}(0,x,y) & = 0.
\end{align*}

To prove the following lemma we use the definition
\[
C:=E_{0}\left[e^{\sum_{i=1}^{k}\sigma_{i}\int_{0}^{\infty}\delta_{0}(X_{s})ds}\right].
\]
\begin{lemma}
The following inequalities are valid: $m_{1}(t,x,0)\leq C\,p(t,x,0)$ and $m_{1}(t,x,y)\leq C\,p(t,x,y)$.
\end{lemma}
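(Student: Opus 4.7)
The plan is to adapt the argument of Lemma~\ref{L4} to the multi-source setting by replacing the hitting time of the single source $0$ with the first hitting time of the full source set $S := \{x_1, \ldots, x_k\}$. Applying the Kac--Feynman formula to the first moment equation in the multi-source case yields
\[
m_1(t,x,y) = p(t,x,y)\,E_x\!\left[\exp\!\left(\sum_{i=1}^k \sigma_i \int_0^t \delta_{x_i}(X_s)\,ds\right)\bigg|\, X_t = y\right],
\]
so the lemma reduces to showing that the conditional expectation on the right is bounded by $C$.

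First I would introduce $\tau := \inf\{s \geq 0 : X_s \in S\}$, the first hitting time of $S$ for the walk started at $x$. On the event $\{\tau > t\}$ the integrand vanishes identically on $[0,t]$ and the exponential factor equals $1$. On the complementary event $\{\tau \leq t\}$ the integrand vanishes on $[0,\tau)$, so only the post-$\tau$ portion contributes. The next step is to apply the strong Markov property at $\tau$: conditional on $X_\tau = x_j$, the shifted process $X_{\tau+\cdot}$ is a random walk started at $x_j$, so the remaining exponential factor has the form $E_{x_j}[\exp(\sum_i \sigma_i \int_0^{t-\tau}\delta_{x_i}(X_u)\,du)]$. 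This expression is monotone nondecreasing in the time horizon and therefore dominated by its $t\to\infty$ limit; by translation invariance of the walk, relabeling $x_j$ as the origin turns this limit into precisely the quantity $C$ defined in the statement.

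Putting the pieces together produces the bound on the unconditional path functional; the remaining task is to carry it through the conditioning on $X_t = y$, exactly as in Lemma~\ref{L4}. One decomposes the Kac--Feynman exponential at $\tau$, uses that the integrand is zero before $\tau$, applies the Markov property at $\tau$, and finally reassembles the transition density via $p(t,x,y)\le p(t,0,0)$ together with the Chapman--Kolmogorov identity $\sum_v p(t-s,x,v)\,p(s,v,y) = p(t,x,y)$. The main obstacle is precisely this manipulation of the conditional expectation $E_x[\,\cdot\mid X_t = y]$ in the presence of several source points: one must verify that the strong Markov step is compatible with conditioning on the terminal location $y$, which in the single-source proof was handled by using time reversal of the walk and by the symmetry $m_1(t,x,y)=m_1(t,y,x)$. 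The same symmetry and the fact that the source set $S$ is finite will let the bound go through uniformly in $j$, giving $m_1(t,x,y) \le C\,p(t,x,y)$ in full generality, and in particular $m_1(t,x,0) \le C\,p(t,x,0)$.
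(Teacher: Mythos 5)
Your proposal diverges from the paper at the crucial step, and the divergence introduces a genuine gap. The paper does not decompose at the hitting time of the source set $S$; it applies H\"{o}lder's inequality to the product $\prod_{i=1}^{k} e^{\sigma_i\int_0^t\delta_{x_i}(X_s)\,ds}$ with exponents $p_i=(\sigma_1+\cdots+\sigma_k)/\sigma_i$. This turns each factor into a \emph{single-source} exponential functional with the total weight $\sum_j\sigma_j$ attached to the single point $x_i$, each of which can then be bounded by $E_0\bigl[e^{(\sum_j\sigma_j)\int_0^{\infty}\delta_0(X_s)\,ds}\bigr]$ exactly as in Lemma~\ref{L4}; recombining the powers $1/p_i$ (which sum to $1$) yields $C$. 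That H\"{o}lder step is the entire content of the multi-source lemma beyond the single-source case, and your argument omits it.

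Concretely, the step that fails in your write-up is the claim that after applying the strong Markov property at $\tau=\inf\{s: X_s\in S\}$, ``relabeling $x_j$ as the origin turns this limit into precisely the quantity $C$.'' Translation invariance applied to
\[
E_{x_j}\Bigl[\exp\Bigl(\textstyle\sum_{i=1}^{k}\sigma_i\int_0^{\infty}\delta_{x_i}(X_u)\,du\Bigr)\Bigr]
\]
gives $E_{0}\bigl[\exp\bigl(\sum_{i}\sigma_i\int_0^{\infty}\delta_{x_i-x_j}(X_u)\,du\bigr)\bigr]$, which still involves $k$ distinct points with their individual weights $\sigma_i$; it is not the single-point quantity $C=E_0\bigl[e^{(\sum_i\sigma_i)\int_0^{\infty}\delta_0(X_s)\,ds}\bigr]$. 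The inequality you need --- that the multi-point functional is dominated by the single-point one with all the weight concentrated at the starting point --- is true, but it is precisely what the H\"{o}lder argument (or some substitute for it) must prove; it cannot be obtained by relabeling. Your secondary concern about compatibility of the strong Markov step with the conditioning on $X_t=y$ is legitimate but is flagged rather than resolved; in the paper this issue does not arise in this form because H\"{o}lder is applied directly to the conditional expectation and each resulting factor is handled by the already-established single-source estimate.
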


\begin{proof}
From the Kac-Feyman formula, we have the solution for $m_{1}(t,x,y)$:
\begin{align*}
m_{1}(t,x,0) &=  p(t,x,0)E_{x}\left[e^{\sum_{i=1}^{k}\sigma_{i}\int_{0}^{t}\delta_{x_{i}}(X_{s})ds}|X_{t}=0\right]\\
 & = p(t,x,0)E_{x}\left[e^{\sigma_{1}\int_{0}^{t}\delta_{x_{1}}(X_{s})ds}\cdots e^{\sigma_{k}\int_{0}^{t}\delta_{x_{k}}(X_{s})ds}|X_{t}=0\right].
\end{align*}
By the H\"{o}lder inequality we get
\[
E\left[X_{1}X_{2}\cdots X_{k}\right] \leq  \left(E|X_{1}|^{p_{1}}\right)^{\frac{1}{p_{1}}}\cdot\left(E|X_{2}|^{p_{2}}\right)^{\frac{1}{p_{2}}}\cdots\left(E|X_{k}|^{p_{k}}\right)^{\frac{1}{p_{n}}},
\]
where
\[
\frac{1}{p_{1}}+\frac{1}{p_{2}}+\cdots+\frac{1}{p_{k}}=1.
\]
If
\[
\frac{1}{p_{i}}=\frac{\sigma_{i}}{\sigma_{1}+\sigma_{2}+\cdots+\sigma_{k}},\qquad i=1,2,\ldots,k,
\]
and
\[
\sum_{i=0}^{k}\sigma_{i}<G_{0}^{-1}(0,0)
\]
then we have
\begin{align*}
m_{1}(t,x,0) & \le p(t,x,0)E_{x}\left[e^{\sigma_{1}p_{1}\int_{0}^{\infty}\delta_{x_{i}}(X_{s})ds}|X_{t}=0\right]^{\frac{1}{p_{1}}}\\
&\qquad\cdots E_{x}\left[e^{\sigma_{k}p_{k}\int_{0}^{\infty}\delta_{x_{i}}(X_{s})ds}|X_{t}=0\right]^{\frac{1}{p_{k}}}\\
 & \leq  p(t,0,0)E_{0}\left[e^{\sigma_{1}p_{1}\int_{0}^{\infty}\delta_{0}(X_{s})ds}|X_{t}=0\right]^{\frac{1}{p_{1}}}\\
 &\qquad\cdots E_{0}\left[e^{\sigma_{k}p_{k}\int_{0}^{\infty}\delta_{0}(X_{s})ds}|X_{t}=0\right]^{\frac{1}{p_{k}}}\\
 & = p(t,0,0)E_{0}\left[e^{\sum_{i=1}^{k}\sigma_{i}\int_{0}^{\infty}\delta_{0}(X_{s})ds}|X_{t}=0\right].
\end{align*}
In a similar way, one can prove that
\[
m_{1}(t,x,y)\leq p(t,x,y)C.
\]
\end{proof}
\begin{theorem}
For $d\ge3$,
\[
m_{l}(t,x,y)\leq C^{l-1}B^{l}l!p(t,x,y)
\]
where $C:=E_{0}\left[e^{\sum_{i=1}^{k}\sigma_{i}\int_{0}^{\infty}\delta_{0}(X_{s})ds}\right]$,
$C_{t}:=E_{0}\left[e^{\sum_{i=1}^{k}\sigma_{i}\int_{0}^{t}\delta_{0}(X_{s})ds}\right]$
and $B=2(\mu+\sum_{i=1}^{k}\sigma_{i})G_0(0,0)$.
\end{theorem}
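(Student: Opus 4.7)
The plan is to lift the inductive argument of Theorem~\ref{thm: single_source_moment} from the single-source setting to the present multi-source setting, with essentially no new input. The base case $l=1$ is supplied by the preceding lemma, which gives $m_1(t,x,y)\le C\,p(t,x,y)$. For the inductive step I would carry the hypothesis in the form $m_j(t,x,y)\le C^{\,j}B^{\,j-1}D_j\,p(t,x,y)$ for all $j<l$, where $\{D_j\}$ is the Catalan-type sequence $D_1=1$, $D_j=\sum_{i=1}^{j-1}\binom{j-1}{i}D_iD_{j-i}$ already used in the single-source proof.

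Next I would apply Duhamel's formula to the multi-source equation for $m_l$, replacing the variable coefficient $2\beta(x)=2(\mu+\sum_i\sigma_i\delta_{x_i}(x))$ by its pointwise bound $2(\mu+\sum_i\sigma_i)$, whose product with $G_0(0,0)$ is $B$. Substituting the inductive hypothesis produces the combinatorial identity $\sum_{i=1}^{l-1}\binom{l-1}{i}D_iD_{l-i}=D_l$ on the nose, together with a product of $C$'s and $B$'s of the expected orders, and a space-time convolution of the form $\int_0^t\sum_v p(t-s,x,v)\,C_{t-s}\,p(s,v,y)^2\,ds$. From here the chain of inequalities is identical to the single-source case: $C_{t-s}\le C$ (valid because the integrand in the defining exponent is nonnegative), $p(s,v,y)\le p(s,0,0)$ to tame one of the two $p$-factors at time $s$, Chapman--Kolmogorov $\sum_v p(t-s,x,v)p(s,v,y)=p(t,x,y)$ to pull the fundamental solution out of the integral, and finally $\int_0^t p(s,0,0)\,ds\le G_0(0,0)<\infty$, which is where the transience hypothesis $d\ge3$ enters via Lemma~\ref{lem:Green_function_asym}.

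To upgrade the bound involving $D_l$ into a Carleman-type bound involving $l!$, I would invoke the generating-function argument already recorded in the paper: the exponential generating function $D(z)=\sum_{l\ge1}D_l z^l/l!$ satisfies $D(z)=z+D(z)^2$, hence $D(z)=(1-\sqrt{1-4z})/2$ has radius of convergence $1/4$, so $D_l\le 4^l\,l!$. Absorbing the resulting $4^l$ into the constants by a harmless inflation of $B$ (and a trivial shift of the exponent of $C$) yields the stated bound $m_l(t,x,y)\le C^{l-1}B^l\,l!\,p(t,x,y)$.

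The only conceptually new step in the whole argument is the H\"older-based estimate $m_1\le C\,p(t,x,y)$, which is exactly what the preceding lemma provides under the assumption $\sum_i\sigma_i<G_0^{-1}(0,0)$. Beyond that the induction is mechanical, and the main bookkeeping obstacle is keeping the exponents of $C$ and $B$ aligned: one factor of $C$ arises from $m_1(t-s,x,v)$, and the remaining factors $C^{\,i}\cdot C^{\,l-i}=C^l$ from $m_i(s,v,y)\cdot m_{l-i}(s,v,y)$ via the inductive hypothesis, exactly as in the single-source case.
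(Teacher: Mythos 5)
Your proposal is correct and follows essentially the same route as the paper: Duhamel's formula plus induction with the Catalan-type sequence $D_l$, the chain of bounds $C_{t-s}\le C$, $p(s,v,y)\le p(s,0,0)$, Chapman--Kolmogorov, and $\int_0^t p(s,0,0)\,ds\le G_0(0,0)$, exactly as the paper does for $l=2$ before deferring the general step to the single-source argument. The slight mismatch between the exponents of $C$ and $B$ produced by the induction and those in the statement, which you absorb by inflating constants, is present in the paper's own treatment as well.
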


\begin{proof}
The basic idea  is similar to that of the proof of Theorem~\ref{thm: single_source_moment}. We
will use the mathematical induction. For $l=2$ the second moment satisfies the following equation
\begin{align*}
\frac{\partial m_{2}}{\partial t} & = \mathcal{L}_{a}m_{2}+\sum_{i=1}^{k}\sigma_{i}\delta_{x_{i}}(x)m_{2}+2(\mu+\sum_{i=1}^{k}\sigma_{i}\delta_{x_{i}}(x))m_{1}^{2},\\
m_{2}(0,x,y) & = 0.
\end{align*}
From Duhamel's formula, we have
\[
\begin{split}m_{2}(t,x,y)&= 2(\mu+\sum_{i=1}^{k}\sigma_{i}\delta_{x_{i}}(x))\int_{0}^{t}\sum_{v\in\mathbb{Z}^{d}}m_{1}(t-s,x,v)m_{1}^{2}(s,v,y)\,ds\\
& \leq  2(\mu+\sum_{i=1}^{k}\sigma_{i})\int\sum_{v\in\mathbb{Z}^{d}}p(t-s,x,v)C_{t-s}p^{2}(s,v,y)C_{s}^{2}\,ds\\
& \leq  2(\mu+\sum_{i=1}^{k}\sigma_{i})\int_{0}^{t}\sum_{v\in\mathbb{Z}^{d}}p(t-s,x,v)C_{t}p^{2}(s,v,y)C_{s}\,ds\\
& \leq  2(\mu+\sum_{i=1}^{k}\sigma_{i}) C\int_{0}^{t}\sum_{v\in\mathbb{Z}^{d}}p(t-s,x,v)p^{2}(s,v,y)ds\\
& \leq  2(\mu+\sum_{i=1}^{k}\sigma_{i}) C^{2}\int_{0}^{t}p(s,0,0)\sum_{v\in\mathbb{Z}^{d}}p(t-s,x,v)p(s,v,y)ds\\
& \leq  2(\mu+\sum_{i=1}^{k}\sigma_{i}) C^{2}p(t,x,y)\int_{0}^{t}p(s,0,0)ds\\
& \leq  2(\mu+\sum_{i=1}^{k}\sigma_{i}) C^{2}p(t,x,y)G_{0}(0,0)\\
& \leq  2BC^{2}p(t,x,y)
\end{split}
\]
The rest of the proof is similar to the proof of Theorem~\ref{thm: single_source_moment}, so
we omit it here.
\end{proof}
\begin{theorem}
\label{main result-1} Let $N(t,y)$, $y\in\mathbb{Z}^{d}$, be a
random field as described above, and consider the multiple-source
perturbations case, i.e. $\beta-\mu=\sum_{i=1}^{k}\sigma_{i}\delta_{x_{i}}(x)$, for all $i=1,\ldots,k$ and $\sigma_{i} > 0$.
If $d\ge3$ and $\sum_{i=1}^{k}\sigma_{i}<G^{-1}_0(0,0)$  then
for all $y\in\mathbb{Z}^{d}$ we have
\[
N(t,y)\xrightarrow{\text{Law}}N(\infty,y).
\]
\end{theorem}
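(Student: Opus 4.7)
The plan is to imitate the strategy used in the single-source case (Theorem~\ref{T4}), which in turn goes back to~\cite{SM_JW_2017}, and replace every appearance of the single-source quantity $A$ by the multi-source constant $C$. The decomposition $N(t,y)=\sum_{x\in\mathbb{Z}^d}n(t,x,y)$ into independent subpopulations is the central device: the $k$-th cumulant of $N(t,y)$ is the sum of the $k$-th cumulants of the $n(t,x,y)$. Hence, to control the distribution of $N(t,y)$ it is enough to control the factorial moments $m_l(t,x,y)$ pointwise in $x$ and then sum (or, equivalently, pass through cumulants).

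First, I would use the bound just established, $m_l(t,x,y)\le C^{l-1}B^{l}l!\,p(t,x,y)$, together with the identity $\sum_{x\in\mathbb{Z}^d}p(t,x,y)=1$, to obtain a finite, $t$-independent bound on every factorial (hence ordinary) moment of $N(t,y)$:
\[
\Expect[N(t,y)^{l}]\le K_{l}:=l!\,(C/B)\cdot(B\cdot\text{const})^{l},
\]
where the constant absorbs the combinatorial Stirling-number factor converting factorial moments into ordinary ones and the summation over $x$. This immediately gives the Carleman condition $K_{l}\le l!\,\widetilde{C}^{l}$, so any limit distribution is uniquely determined by its moments.

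Next I would show that, for each fixed $x$ and $y$, the factorial moments converge: $m_l(t,x,y)\to m_l(\infty,x,y)$ as $t\to\infty$. For $l=1$ this follows from Lemma~\ref{lem:Lemma eigenvalue} applied to the Kac--Feynman representation (with $\sigma$ replaced by $\sum\sigma_{i}\delta_{x_{i}}$, which still lies in the Kato class so that the Feynman--Kac exponent converges under the assumption $\sum\sigma_{i}<G_{0}^{-1}(0,0)$). For $l\ge 2$ I would iterate the Duhamel identity used in the proof of the preceding theorem: each $m_l$ is expressed as a bounded time integral of products of lower-order moments against the transition kernel, and the dominating function $C^{l-1}B^{l}l!\,p(t,x,y)$ is integrable over time in a suitable sense. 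Dominated convergence then yields the pointwise limit $m_l(\infty,x,y)$, and summing in $x$ (again justified by the uniform bound and the normalization of $p$) gives $\Expect[N(t,y)^{l}]\to M_{l}(y)$ for every $l$.

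The conclusion is then the standard ``moments plus Carleman'' argument: the sequence $N(t,y)$ is tight (being moment-bounded), each subsequential limit has the common moment sequence $\{M_l(y)\}$, and since this sequence satisfies Carleman's condition the limit law $N(\infty,y)$ is uniquely determined, giving $N(t,y)\xrightarrow{\text{Law}}N(\infty,y)$. The main obstacle I expect is the second step -- verifying that the pointwise moment limits exist and that one can interchange the limit in $t$ with the summation over $x$ and with the Duhamel iteration. The Hölder bound used in the preceding lemma (which produced the constant $C$) is exactly what makes the multi-source Feynman--Kac functional behave like the single-source one, so the needed domination is available; still, checking it carefully for every $l$ is the place where the proof requires genuine work, the rest being essentially identical to~\cite{SM_JW_2017}.
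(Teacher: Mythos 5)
Your proposal follows essentially the same route as the paper, which for this theorem simply invokes the scheme of Theorem~\ref{T4} and the argument of~\cite{SM_JW_2017}: decompose $N(t,y)=\sum_{x}n(t,x,y)$ into independent subpopulations, control the factorial moments via the bound $m_{l}(t,x,y)\le C^{l-1}B^{l}l!\,p(t,x,y)$ from the preceding theorem, pass through cumulants to obtain the Carleman condition, and conclude by the method of moments. The paper gives no further detail here, so your fleshed-out version is consistent with, and more explicit than, what is actually written.
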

The proof of the  theorem is based on the scheme of the proof of Theorem~\ref{T4}, see details in~\cite{SM_JW_2017}.

\textbf{Acknowledgements.}
S.~Molchanov was partly supported by the Russian Science Foundation (RSF), project No. 17-11-01098.  E.~Yarovaya was supported by the Russian Foundation for Basic Research (RFBR), project No. 17-01-00468.


\begin{thebibliography}{99}
\bibitem{1998}S. Albeverio, L. V. Bogachev, and E. B. Yarovaya,
 Asymptotics of branching symmetric random walk on
the lattice with a single source, C. R. Acad.
Sci. Paris, S\'{e}r. 1: Math, 326(8), 975-980 (1998).

\bibitem{Bogachev_1998a}L. V. Bogachev, and E. B. Yarovaya,
A limit theorem for a supercritical branching random walk on $\mathbb{Z}^{d}$
with a single source. Uspehi Matematicheskih Nauk, 53, 229- 230 (1998).

\bibitem{Bogachev_1998b}L. V. Bogachev, and E. B. Yarovaya,
Moment analysis of a branching random walk on a lattice with a single
source. Doklady Akademii Nauk, 363, 439-442 (1998).

\bibitem{Feng_2012}Y. Feng, Molchanov,
S. and Whitmeyer J. Random walks with heavy tails and limit
theorems for branching processes with migration and immigration. Stochastic
and Dynamics, 12, 1-23 (2012).

\bibitem{Kondratiev_2008}Y. Kondratiev, O. Kutoviy, and S. Pirogov,
Correlation functions and invariant measures in continuous
contact model. Infin. Dimens. Anal. Quantum Probab. Relat. Top., 11(2),
231-258 (2008).

\bibitem{Kondratiev_2016}Y. Kondratiev, S. Molchanov, S. Pirogov,
E. Zhizhina, On ground state of non-local Schr\"{o}dinger
operators, Applicable Analysis, DOI: 10.1080/00036811.2016.1192138 (2016).

\bibitem{Kondratiev_Molchanov_2016}Y. Kondratiev, S. Molchanov, B.
Vainberg,  Spectral analysis of non-local Schr\"{o}dinger operators,
arXiv:1603.01626 (2016).



\bibitem{Koralov_2013}L. Koralov and S.
Molchanov, The structure of the population inside the propagating
front, Journal of Mathematical Sciences (Problems In Mathematical
Analysis) 189, 637-658 (2013).

\bibitem{SM_JW_2017}S. Molchanov and J.
Whitmeyer, Stationary distributions in Kolmogorov-Petrovski-
Piskunov-type models with an infinite number of particles Mathematical
Population Studies, Routledge, 24, 147-160 (2017).

\bibitem{Molchanov_Yarovaya_2012}S. Molchanov and E. Yarovaya,
\textquotedblleft Limit theorems for the Green function of the lattice
Laplacian under large deviations of the random walk,\textquotedblright{}
Izv. Ross. Akad. Nauk, Ser. Mat. 76(6), 123-152 (2012).

\bibitem{Molchanov_2013}S. Molchanov and E. Yarovaya, Large
deviations for a symmetric branching random walk on a multidimensional
lattice, Proceedings of the Steklov Institute of Mathematics, 282,
186-201 (2013).

\bibitem{Molchanov_Yarovaya_2012_a}S. Molchanov and E. Yarovaya,
Limit theorems for the Green function of the lattice Laplacian under
large deviations of the random walk. Izv. RAN. Ser. Mat., 76(6), 1190-1217(2012).

\bibitem{Molchanov_Yarovaya_2012_b}S. Molchanov and E. Yarovaya,
 Branching Processes with Lattice Spatial Dynamics and a Finite
Set of Particle Generation Centers. Doklady Mathematics, 86(2), 638-641 (2012).

\bibitem{Molchanov_Yarovaya_2012_c}S. Molchanov and E. Yarovaya, Population Structure inside the Propagation Front of a Branching
Random Walk with Finitely Many Centers of Particle Generation. Doklady
Mathematics, 86(2), 787-790 (2012).

\bibitem{Yarovaya_2007} E. Yarovaya, Branching random walks in a heterogeneous environment. Center of Applied Investigations of the Faculty of Mechanics and Mathematics of the Moscow State University, Moscow, in Russian (2007).

\end{thebibliography}
\end{document}